\newcommand{\eps}{\varepsilon}
\newcommand{\R}{\mathbb{R}}
\newcommand{\RN}{{\mathbb{R}^N}}
\newcommand{\RT}{{\mathbb{R}^3}}
\renewcommand{\le}{\leqslant}
\renewcommand{\ge}{\geqslant}
\renewcommand{\l }{\lambda}
\newcommand{\n }{\nabla }
\renewcommand{\t}{\theta}
\renewcommand{\O}{\Omega}
\renewcommand{\H}{H^1(\RN)}
\newcommand{\Hr}{H^1_r(\RN)}
\renewcommand{\P}{{\cal P}}
\newcommand{\D }{{\mathcal D}^{1,2}(\RN)}
\newcommand{\irn }{\int_{\RN}}
\newcommand{\irt }{\int_{\RT}}
\def\bbm[#1]{\mbox{\boldmath $#1$}}
\newtheorem{theorem}{Theorem}[section]
\newtheorem{lemma}[theorem]{Lemma}
\newtheorem{definition}[theorem]{Definition}
\newtheorem{remark}[theorem]{Remark}
\renewenvironment{proof}{\noindent{\textbf{Proof\quad}}}{$\hfill\square$\vspace{0.2 cm}\\}
\newenvironment{proofmain}{\noindent{\textbf{Proof of Theorem  \ref{main}\quad}}}{$\hfill\square$\vspace{0.2 cm}\\}
\newenvironment{proofth:app}{\noindent{\textbf{Proof of Theorem  \ref{th:app}\quad}}}{$\hfill\square$\vspace{0.2 cm}\\}
\title{{\bf The elliptic Kirchhoff equation in $\R^N$\\
perturbed by a local nonlinearity\footnote{The author is supported
by M.I.U.R. - P.R.I.N. ``Metodi variazionali e topologici nello
studio di fenomeni non lineari''}}}
\author{A. Azzollini \thanks{Dipartimento di Matematica ed Informatica, Universit\`a degli
Studi della Basilicata,  Via dell'Ateneo Lucano 10, I-85100 Potenza,
Italy, e-mail: {\tt antonio.azzollini@unibas.it}} }
\date{}
\begin{document}
    \maketitle

\begin{abstract}
    In this paper we present a very simple proof of the existence of at least one non trivial solution
    for a Kirchhoff type equation on $\RN$, for $N\ge 3$.
    In particular, in the first part of the paper we are interested
    in studying the existence of a positive solution to the elliptic Kirchhoff equation under the effect of
    a nonlinearity satisfying the general Berestycki-Lions
    assumptions.
    In the second part we look for ground
    states using minimizing arguments on a suitable natural constraint.
\end{abstract}
    {\it keywords:} {Kirchhoff equation, \and
    Pohozaev identity, \and natural constraint, \and minimizing sequence }
    \\
    {\it MSC:} {35J20 \and
    35J60}

\section*{Introduction}

The multidimensional Kirchhoff equation is
    \begin{align}\label{eq:kirc}
        \frac{\partial^2 u}{(\partial t)^2} - (1+\int_\O |\n u|^2) \Delta u=0
    \end{align}
where $\O\subset\R^N$ and $u:\O\to\R$ satisfies some initial or
boundary conditions. It arises from the following Kirchhoff'
nonlinear generalization (see \cite{K}) of the well known d'Alembert
equation
    \begin{equation}\label{eq:kir}
        \rho\frac{\partial^2 u}{(\partial t)^2} - \left(\frac{P_0}{h}+\frac {E}{2L}\int_0^L \left|\frac{\partial u}{\partial x}\right|^2\,dx
        \right) \frac{\partial^2 u}{(\partial x)^2}=0.
    \end{equation}
Equation \eqref{eq:kir} describes a vibrating string, taking into
account the changes in length of the string during the vibration.
Here, $L$ is the length of the string, $h$ is the area of the cross
section, $E$ is the Young modulus of the material, $\rho$ is the
mass density and $P_0$ is the initial tension. In \cite{L} the
problem was proposed in the following form:
    \begin{equation*}
        \left\{
            \begin{array}{ll}
            \frac{\partial^2 u}{(\partial t)^2}-M(\int_\O|\n u|^2)\Delta u=f&\hbox{in }\O\times(0,T),
            \\
            u=0&\hbox{in }\partial\O\times(0,T)\\
            u(0)=u_0,\quad u'(0)=u_1,
            \end{array}
        \right.
    \end{equation*}
where $M:[0,+\infty[\to \R$ is a continuous function such that $M(s)\ge c>0$ for any $s\ge 0,$ and $\O$ is a bounded set in $\RN,$ with smooth boundary.\\
This hyperbolic problem has an elliptic version when we look for static solutions.\\
In \cite{V}, it has been considered a class of problems among which
the following elliptic Kirchhoff type equation was included
    \begin{equation*}
        \left\{
            \begin{array}{ll}
            -M(\int_\O|\n u|^2)\Delta u=f&\hbox{in }\O,
            \\
            u=0&\hbox{in }\partial\O
            \end{array}
        \right.
    \end{equation*}
(here $\O$ is an open subset of $\RN$).\\
Taking into account the original formulation of the equation given
by Kirchhoff, we assume the following
    \begin{definition}
        If there exist two positive constants $a$ and $b$ such that
        $M:\R_+\to\R$ can be written $M(s)=a+bs,$ then $M$ is called
        Kirchhoff function.
    \end{definition}
Recently, many authors have used variational methods to study the
Kirchhoff equation perturbed by a local nonlinear term (see \cite{M}
for a short survey on the topic). By arguments based on the mountain
pass theorem, in \cite{ACM} the problem
    \begin{equation*}
        \left\{
            \begin{array}{ll}
            -M(\int_\O|\n u|^2)\Delta u=f(x,u)&\hbox{in }\O,
            \\
            u=0&\hbox{in }\partial\O
            \end{array}
        \right.
    \end{equation*}
has been solved in a bounded domain of $\RN$ under suitable growth
conditions on $f:\RN\times\R\to\R$ and $M:[0,+\infty[\to\R.$ Taking
$N=1,2$ or $3$, the problem has been treated also in \cite{PZ} where
$M$ is a Kirchhoff function, and the nonlinearity $f(x,t)$ has been
supposed to behave linearly near 0 and like $t^3$ at infinity. The
Yang index has been used to find a nontrivial solution. A similar
growth at infinity has been assumed in \cite{PZ2} where the authors
have looked for sign changing solutions. They also have obtained a
sign changing solution when the nonlinearity $f$ satisfies either
the following growth condition
    $$|f(x,t)|\le C(|t|^{p-1}+1),\hbox{ uniformly in $x$, for } p<4$$ or
the following Ambrosetti-Rabinowitz condition
    \begin{equation*}
        \nu F(x,t)\le
        tf(x,t), \;\hbox{for } |t| \hbox{ large and }\nu >4.
    \end{equation*}
In \cite{MZ}, the equation has been studied assuming that the
nonlinearity grows at infinity more than $t^3$, without introducing
the Ambrosetti-Rabinowitz hypothesis. Using a variational approach,
a multiplicity result has been showed in \cite{HZ}. Finally we
recall the recent result obtained in \cite{R}, where three solutions
have been found for the Kirchhoff type problem
    \begin{equation*}
        \left\{
            \begin{array}{ll}
            -M(\int_\O|\n u|^2)\Delta u=\l f(x,u)+\mu g(x,t) &\hbox{in }\O,
            \\
            u=0&\hbox{in }\partial\O
            \end{array}
        \right.
    \end{equation*}
where $\l$ and $\mu$ are two parameters.

The joining point of all these papers is that they consider the equation on a bounded domain, with Dirichlet conditions on the boundary.\\
In this paper we study an autonomous Kirchhoff type equation on all
the space $\RN,$ looking for the existence of positive solutions,
namely we consider the problem
    \begin{equation}\tag{$\mathcal K$}\label{eq:main}
                \left\{
            \begin{array}{ll}
            -M(\int_\O|\n u|^2)\Delta u= g(u) &\hbox{in }\RN,\; N\ge
            3,
            \\
            u>0.
            \end{array}
        \right.
    \end{equation}
In this sense, the problem turns out to be a generalization of the
well known Schr{\"o}dinger equation
    \begin{equation}\tag{$\mathcal S$}\label{eq:schr}
        -\Delta u = g(u),\quad \hbox{in }\RN.
    \end{equation}
In the first part of the paper we are interested in studying the
problem \eqref{eq:main} in presence of a Berestycki-Lions
nonlinearity. In order to explain what this means, we provide the
following definitions
    \begin{definition}\label{def:BL}
        A function $g:\R\to\R$ is called a Berestycki-Lions nonlinearity if
        it satisfies the following assumptions
        \begin{itemize}
\item[({\bf g1})] $g\in C(\R,\R)$, $g(0)=0$;
\item[({\bf g2})]
$-\infty <\liminf_{s\to 0^+} g(s)/s\le \limsup_{s\to 0^+}
g(s)/s=-m<0$;
\item[({\bf g3})] $-\infty \le\limsup_{s\to +\infty}
g(s)/s^{2^*-1}\le 0$;
\item[({\bf g4})] there exists $\zeta>0$
such that $G(\zeta):=\int_0^\zeta g(s)\,d s>0$.
\end{itemize}

        A function $g:\R\to\R$ is called a zero mass Berestycki-Lions nonlinearity if
        it satisfies ({\bf g1}), ({\bf g3}), ({\bf g4}) and the following zero mass assumption

        ({\bf g2})' $-\infty <\liminf_{s\to 0^+} g(s)/s$,
$\quad\limsup_{s\to 0^+} g(s)/s^{2^*-1}\le 0$.
    \end{definition}

\begin{remark}
    Using the terminology inherited by \cite{BL1}, we
    refer to the constant $m$ in ({\bf g2}) calling it {\it mass}. This
    is the reason for which we say  that a function $g$ satisfying ({\bf g2})' instead of ({\bf g2})
    is a {\it zero mass} nonlinearity.
\end{remark}

In the very celebrated paper \cite{BL1}, these types of
nonlinearities appeared for the first time, and it was showed that
the hypotheses ({\bf g1}),$\ldots$,({\bf g4}) are {\it almost}
optimal to get an existence result for the Schr{\"o}dinger equation.

In the first section of this paper, we use a simple rescaling
argument to establish a sufficient condition for the existence of a
solution to \eqref{eq:main}. The first result we get is the
following
    \begin{theorem}\label{main}
        Let $M:\R_+\to\R_+\setminus\{0\}$ a continuous function such that
            \begin{equation}\label{eq:hyp}
                \liminf_{t\to 0} t M(t^{\frac{2-N}2})=0
            \end{equation}
        and let $g:\R\to\R$ a (possibly zero mass) Berestycki-Lions nonlinearity. Then the problem \eqref{eq:main} has a
        solution in $C^2(\RN)$.
    \end{theorem}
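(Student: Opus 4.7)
The plan is to reduce \eqref{eq:main} to the classical semilinear Schr\"odinger equation by a pure rescaling. By the celebrated Berestycki--Lions theorem \cite{BL1}, the companion problem
\begin{align*}
-\Delta v = g(v)\quad\text{in }\R^N
\end{align*}
admits a positive classical solution $v\in C^2(\R^N)$ with finite Dirichlet energy $\alpha:=\|\n v\|_2^2\in(0,+\infty)$, in both the massive and the zero-mass case.

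For each $\sigma>0$ I would set $u_\sigma(x):=v(\sigma x)$. A direct computation yields
\begin{align*}
-\Delta u_\sigma=\sigma^2\, g(u_\sigma), \qquad \|\n u_\sigma\|_2^2=\sigma^{2-N}\alpha,
\end{align*}
so $u_\sigma$ solves \eqref{eq:main} as soon as
\begin{align*}
\phi(\sigma):=\sigma^2\, M\!\left(\alpha\,\sigma^{2-N}\right)=1.
\end{align*}
Positivity and $C^2$ regularity of $u_\sigma$ are inherited from $v$, so the whole problem reduces to showing that the continuous strictly positive function $\phi:(0,+\infty)\to(0,+\infty)$ attains the value $1$.

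For this I would apply the intermediate value theorem. Since $M$ is continuous on $\R_+$ with $M(0)>0$, and $\alpha\sigma^{2-N}\to 0^+$ as $\sigma\to+\infty$, one gets at once $\phi(\sigma)\to+\infty$. At the other end, the substitution $\tau:=\alpha^{-1/(N-2)}\sigma$ (so that $\alpha\,\sigma^{2-N}=\tau^{2-N}$) rewrites
\begin{align*}
\phi(\sigma)=\alpha^{2/(N-2)}\,\tau^{2}\,M(\tau^{2-N}),
\end{align*}
and, setting $t:=\tau^{2}$, the hypothesis \eqref{eq:hyp} becomes exactly $\liminf_{\sigma\to 0^+}\phi(\sigma)=0$. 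By continuity there is then $\sigma_0>0$ with $\phi(\sigma_0)=1$, and $u:=u_{\sigma_0}$ is the desired positive $C^2$ solution.

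There is no deep nonlinear-analytic obstacle in this argument: all the hard PDE work is packaged inside the Berestycki--Lions existence result. The only genuinely delicate point is the choice of the rescaling that absorbs the constant $\alpha=\|\n v\|_2^2$, so that the near-zero behaviour of $\phi$ can be read directly off \eqref{eq:hyp}; once this bookkeeping is set up correctly, the conclusion follows by continuity and the IVT.
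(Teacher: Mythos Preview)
Your proof is correct and follows essentially the same route as the paper: take a Berestycki--Lions solution $v$ of $-\Delta v=g(v)$, rescale to $u_\sigma(x)=v(\sigma x)$, and apply the intermediate value theorem to $\sigma\mapsto\sigma^{2}M(\alpha\,\sigma^{2-N})$ to locate the right scaling parameter. You are in fact more explicit than the paper about the substitution $\tau=\alpha^{-1/(N-2)}\sigma$ that absorbs the constant $\alpha=\|\nabla v\|_2^2$ so that hypothesis~\eqref{eq:hyp} applies directly; the paper leaves this bookkeeping implicit.
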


We point out that any Kirchhoff function satisfies our assumptions
if $N=3.$ On the other hand, when $M$ is a Kirchhoff type function
we are able to refine our estimates and we get the following result

    \begin{theorem}\label{th:app}
        Let $g:\R\to\R$ a (possibly zero mass) Berestycki-Lions nonlinearity, $f:\R_+\to\R_+$ a continuous
        function and $M(s)=a+bf(s).$
        Then for any $N\ge 3$ there exists a positive constant $\delta$ (depending on $a$) such that if
        $b\in ]0,\delta],$  the
        problem \eqref{eq:main} has a solution in $C^2(\RN).$ If
        moreover
        \begin{equation}\label{eq:hypp}
                \liminf_{t\to 0}  t^{\frac 2
                {2-N}}f(t)=0,
            \end{equation}
        then there exists a positive constant $\delta$ (depending on $b$) such that if
        $a\in ]0,\delta],$  the
        problem \eqref{eq:main} has a solution in $C^2(\RN).$
    \end{theorem}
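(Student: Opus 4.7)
The plan is to adapt, to the special structure $M(s)=a+bf(s)$, the rescaling argument that underlies Theorem~\ref{main}. Start by invoking the Berestycki--Lions existence theorem (in its classical or zero-mass version, according to the type of nonlinearity $g$) to produce a positive, radially symmetric, $C^2$ solution $u$ of the autonomous Schr\"odinger equation $-\Delta u=g(u)$ on $\RN$; since $u$ is non-constant and has finite Dirichlet energy, the constant $c:=\|\nabla u\|_2^2$ is strictly positive. Looking for a Kirchhoff solution in the one-parameter family of dilations $u_\lambda(x):=u(x/\lambda)$, $\lambda>0$, a direct computation gives $\|\nabla u_\lambda\|_2^2=\lambda^{N-2}c$ and $-\Delta u_\lambda=\lambda^{-2}g(u_\lambda)$, so that $u_\lambda$ solves \eqref{eq:main} if and only if $\lambda^2=a+bf(\lambda^{N-2}c)$. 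Thus the whole theorem reduces to exhibiting, under each of the two sets of hypotheses, a positive zero of the continuous function
\[
F(\lambda):=a+bf(\lambda^{N-2}c)-\lambda^2,\qquad F(0)=a+bf(0)>0,
\]
and the $C^2$ regularity of the resulting $u_{\lambda^\ast}$ is inherited from that of $u$.

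For the first assertion (\emph{small $b$, $a$ fixed}), take the test value $\lambda_0:=\sqrt{2a}$, so that $\lambda_0^2-a=a>0$, and set $\delta:=a/\bigl(1+f(\lambda_0^{N-2}c)\bigr)$; then for every $b\in(0,\delta]$ one has $F(\lambda_0)=-a+bf(\lambda_0^{N-2}c)\le 0$, and the intermediate value theorem provides a root of $F$ in $(0,\lambda_0]$. The constant $\delta$ depends only on $a$ (through $\lambda_0$) and on the fixed Schr\"odinger solution $u$, as required.

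For the second assertion (\emph{small $a$, $b$ fixed}), hypothesis \eqref{eq:hypp} produces a sequence $t_n\to 0^+$ with $f(t_n)/t_n^{2/(N-2)}\to 0$. Setting $\lambda_n:=(t_n/c)^{1/(N-2)}$ one has $\lambda_n\to 0^+$ and
\[
\frac{f(\lambda_n^{N-2}c)}{\lambda_n^2}=c^{2/(N-2)}\,\frac{f(t_n)}{t_n^{2/(N-2)}}\longrightarrow 0,
\]
so that $\alpha_n:=\lambda_n^2-bf(\lambda_n^{N-2}c)$ is strictly positive for all sufficiently large $n$. Fixing one such index $n_0$ and letting $\delta:=\alpha_{n_0}>0$, for every $a\in(0,\delta]$ one has $F(\lambda_{n_0})=a-\alpha_{n_0}\le 0$ while $F(0)>0$, and the intermediate value theorem again yields a positive zero of $F$.

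The argument is essentially elementary once the rescaling is in place; the only substantive step is the observation that \eqref{eq:hypp} is exactly what is needed to produce scales $\lambda_n\to 0$ at which the perturbation $bf(\lambda^{N-2}c)$ becomes negligible compared to $\lambda^2$, which is precisely what makes the threshold $\delta$ in the second part depend on $b$ alone (through the fixed Schr\"odinger solution) and not on $a$.
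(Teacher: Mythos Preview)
Your proof is correct and takes essentially the same approach as the paper: the same rescaling family (your $\lambda$ is the reciprocal of the paper's dilation parameter $\bar t$), the same test scale $\lambda_0=\sqrt{2a}$ for the small-$b$ part, and the same use of \eqref{eq:hypp} to locate a scale at which $bf$ becomes negligible compared to $\lambda^2$ for the small-$a$ part. Your threshold $\delta=a/\bigl(1+f(\lambda_0^{N-2}c)\bigr)$ is a slightly more cautious variant of the paper's $a/f(\lambda_0^{N-2}c)$, harmlessly avoiding a division by zero when $f$ vanishes at that point.
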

We again remark that when $M$ is a Kirchhoff function, then
$f=id|_{\R_+}$ and assumption \eqref{eq:hypp} is automatically
satisfied for $N\ge 5$.

In the second part of the paper we study the existence of the so
called {\it ground state} solutions to the Kirchhoff equation
    \begin{equation}\label{eq:kirchhoff}
        -(a+b\irn|\n u|^2)\Delta u = g(u).
    \end{equation}
We recall that a ground state is a solution which minimizes the
functional of the action among all the other solutions.

The problem of finding such a type of solutions is a very classical
problem: it has been introduced by Coleman, Glazer and Martin in
\cite{CGM}, and reconsidered by Berestycki and Lions in \cite{BL1}
for a class of nonlinear equations including the Schr\"odinger's
one. Here we will use a minimizing argument based on an idea
developed in \cite{S} (recent applications can be found in \cite{PS}
and in \cite{RS}). In that paper the author showed that the ground
state for the Schr\"odinger equation can be found as the minimizer
of the functional of the action restricted to a particular natural
constraint. This natural constraint actually is a manifold which is
constituted by all non null functions satisfying the Pohozaev
identity related to the Schr\"odinger equation.\\
Unfortunately, a similar manifold turns out to be a nice constraint
in order to look for ground state solutions of the Kirchhoff
equation only if $N=3$ or $N=4:$ when $N\ge 5,$ we are no more able
to establish if the restricted functional is bounded below. The
final result we get is
    \begin{theorem}\label{th:grst}
        If $N=3$ or $N=4$ and $g$ is a Berestycki-Lions
        nonlinearity, then
        equation \eqref{eq:kirchhoff} possesses a ground state
        solution.
    \end{theorem}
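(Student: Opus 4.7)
The plan is to realize a ground state as a minimizer of the action
\[
I(u)=\frac{a}{2}\irn|\n u|^2+\frac{b}{4}\Bigl(\irn|\n u|^2\Bigr)^{\!2}-\irn G(u)
\]
on the \emph{Pohozaev manifold} $\M:=\{u\in\H\setminus\{0\}:P(u)=0\}$, where
\[
P(u):=\frac{a(N-2)}{2}\irn|\n u|^2+\frac{b(N-2)}{2}\Bigl(\irn|\n u|^2\Bigr)^{\!2}-N\irn G(u)
\]
arises from $\frac{d}{dt}I(u(\cdot/t))\big|_{t=1}$; every classical solution of \eqref{eq:kirchhoff} belongs to $\M$. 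Setting $A:=\|\n u\|_2^2$ and eliminating $\irn G(u)$ through $P(u)=0$ yields the key identity
\[
I|_\M(u)=\frac{a}{N}A+\frac{b(4-N)}{4N}A^2,
\]
which is nonnegative and coercive in $A$ \emph{precisely} when $N\in\{3,4\}$. This both explains the dimensional restriction and gives $m:=\inf_\M I\ge 0$. Comparing the bound $\irn G(u)\le C\|u\|_{2^*}^{2^*}\le C'A^{2^*/2}$ coming from (g2) and (g3) with the equality $\irn G(u)=\tfrac{N-2}{2N}(aA+bA^2)\ge \tfrac{a(N-2)}{2N}A$ valid on $\M$, and using $2^*>2$, I obtain $A\ge c_0>0$ throughout $\M$, hence $m>0$.

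Take a minimizing sequence $(u_n)\subset\M$. Replacing each $u_n$ by its Schwarz symmetrization and then applying the unique dilation returning it to $\M$ (such a dilation exists for $N\in\{3,4\}$ and has $t\le 1$, so as a direct computation shows, it does not increase the explicit form of $I|_\M$), I may assume $(u_n)\subset\Hr$. Coercivity bounds $\|\n u_n\|_2$, while the argument of the previous paragraph also bounds $\|u_n\|_2$, so the sequence is bounded in $\H$. Extract a subsequence $u_n\weakto\bar u$ in $\H$; the compact embedding $\Hr\hookrightarrow L^p$ for $p\in(2,2^*)$, combined with the standard Berestycki-Lions type compactness (which uses the pointwise decay of radial $H^1$ functions), yields $\irn G(u_n)\to\irn G(\bar u)$. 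The main technical obstacle is to rule out $\bar u\equiv 0$: otherwise $\irn G(u_n)\to 0$ would force $A_n\to 0$ via $P(u_n)=0$, giving $I(u_n)\to 0$, in contradiction with $m>0$.

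Weak lower semicontinuity of $A$ provides $P(\bar u)\le\liminf P(u_n)=0$. If $P(\bar u)<0$, a dilation $\bar u_t(x)=\bar u(x/t)$ on $\M$ exists with $t\in(0,1)$, and monotonicity of the explicit form of $I|_\M$ in $t$ yields $I(\bar u_t)<m$, a contradiction; hence $\bar u\in\M$ and $I(\bar u)=m$. By the Lagrange multiplier rule $I'(\bar u)=\mu P'(\bar u)$ for some $\mu\in\R$ (the regularity of $\M$ at $\bar u$ is verified by testing $P'(\bar u)$ against the dilation direction); writing out the associated Euler-Lagrange equation and invoking its \emph{own} Pohozaev identity, subtracted against the constraint $P(\bar u)=0$, one is left with
\[
\mu\bigl[2a\|\n\bar u\|_2^2+(4-N)b\|\n\bar u\|_2^4\bigr]=0,
\]
whose bracket is strictly positive for $N\in\{3,4\}$. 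Thus $\mu=0$, so $\bar u$ solves \eqref{eq:kirchhoff}; since every solution lies in $\M$, $\bar u$ attains the minimum of $I$ over all solutions, i.e., it is a ground state. Positivity is recovered by truncating $g$ to $g(s)\chi_{\{s>0\}}$ (which preserves the Berestycki-Lions conditions) and applying the strong maximum principle.
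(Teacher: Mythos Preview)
Your overall strategy matches the paper's exactly: define the Pohozaev manifold, rewrite $I|_{\M}$ in the closed form $\tfrac{a}{N}A+\tfrac{b(4-N)}{4N}A^{2}$, prove $\inf_{\M}I>0$, symmetrize a minimizing sequence into $H^{1}_r$, pass to a weak limit, project back onto $\M$ by dilation, and finally kill the Lagrange multiplier via the Pohozaev identity of the constrained Euler--Lagrange equation. Your verification that $\M$ is nondegenerate by testing $P'$ against the dilation direction is in fact a slight simplification of the paper's argument (which instead assumes $P'(u)=0$, treats this as a PDE, and applies \emph{its} Pohozaev identity).

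There is, however, one genuine slip. You assert that radial compactness gives $\irn G(u_n)\to\irn G(\bar u)$. This is false for a Berestycki--Lions nonlinearity: since $G(s)\sim-\tfrac{m}{2}s^{2}$ near $0$, the integral $\irn G(u_n)$ contains an $L^{2}$--like piece, and $H^{1}_r$ does \emph{not} embed compactly into $L^{2}(\RN)$, so $L^{2}$ mass can escape to infinity. The paper handles this by the standard splitting $G=G_{1}-G_{2}$ with $G_{1}$ genuinely subquadratic at $0$ and subcritical at infinity (so Strauss gives $\irn G_{1}(u_n)\to\irn G_{1}(\bar u)$) and $G_{2}\ge 0$ (so Fatou gives only $\irn G_{2}(\bar u)\le\liminf\irn G_{2}(u_n)$). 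This weaker information is still enough: it yields $P(\bar u)\le 0$, and for the nontriviality step one uses $P(u_n)=0$ in the form $\tfrac{a(N-2)}{2}A_n\le N\irn G_{1}(u_n)\to 0$ to force $A_n\to 0$, contradicting $m>0$. Once you replace your convergence claim by this $G_{1}/G_{2}$ argument, the rest of your proof goes through unchanged.
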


The paper is so organized:

in Section \ref{sec:proof1} we show our rescaling argument to get a
solution for \eqref{eq:main} in the general case described in
Theorem \ref{main} and in the particular situation of a Kirchhoff
type function as in Theorem \ref{th:app}.

In Section \ref{sec:grst} we study the problem of the existence of a
ground state solution for the Kirchhoff equation using a variational
approach.

\section{Bound state solution}\label{sec:proof1}

In the sequel, we denote by $v$ a ground state solution of
\eqref{eq:schr} (respectively a bound state solution if $g$ is a
zero mass Berestycki-Lions nonlinearity).\\

\begin{proofmain}
    Observe that, by hypothesis \eqref{eq:hyp} and since
    $$\lim_{t\to +\infty} t M(t^{\frac{2-N}2})=+\infty,$$
    by continuity we have that there
    exists $\bar t>0$ such that $\bar t^2 M(\bar t^{2-N}\irn|\nabla v|^2)=1.$
    The function $u:\R^N\to\R$ defined as follows:
        $$x\in\RN\to v(\bar t x)\in\R,$$
    satisfies the equalities
        \begin{equation*}
        \left\{
            \begin{array}{l}
            {M({\irn|\nabla u|^2})} = \frac 1 { \bar t ^2}\\
            -\Delta u (x)= -\bar t^2 \Delta v (\bar t x) = \bar t^2 g(v(\bar t
        x))=\bar t^2 g(u(x)),
            \end{array}
            \right.
        \end{equation*}
    and then it is a solution of \eqref{eq:main}.
\end{proofmain}

\begin{remark}
    We point out that, since the only moment in which we use hypothesis \eqref{eq:hyp} is to determine the
    rescaling parameter $\bar t$, we can relax our assumption just requiring that
        \begin{equation}\label{eq:hyp2}
            \inf_{t\ge 0}t M\left(t^{\frac{2-N}2}\irn|\nabla
            v|^2\right)\le 1.
        \end{equation}
\end{remark}

\begin{proofth:app}
    As in the proof of Theorem \ref{main} we look for a solution to
    the equation
        \begin{equation*}
            t^2\left(a+bf(t^{2-N}\irn |\n v|^2)\right)=1.
        \end{equation*}
    Taking into account the previous remark, it is enough to prove
    that
    $$\inf_{t\ge 0}\Psi(t)\le 1,$$
    where $\Psi(t):=t\left(a+bf(t^{\frac{2-N}2}\irn |\n
    v|^2)\right).$
    Set  $$\bar h=f\left((2a)^{\frac{N-2}2}\irn |\n
    v|^2\right)$$
    and $\delta_1= \frac a {\bar h}. $ It is easy to verify that, if
    $b\le \delta_1,$ then
    $\Psi(1/2a)\le 1.$\\
    Now suppose that \eqref{eq:hypp} holds. We deduce that
    $$\liminf_{t\to +\infty} t f\left(t^{\frac{2-N}2}\irn|\n v|^2\right)=0.$$
    Let $\bar t$ such that $ \bar t f\left(\bar t^{\frac{2-N}2}\irn|\n v|^2\right)\le\frac 1
    {2b}$ and choose $a\le \delta_2=\frac 1 {2\bar t}.$ Again we have
    that $\Psi(\bar t)\le1.$
\end{proofth:app}

\section{Ground state solution}\label{sec:grst}

In this section we use a variational approach which requires some
preliminaries. In next subsection we will use the same arguments as
in \cite{BL1} to modify the nonlinearity $g$ in such a way we can
study equation \eqref{eq:main} looking for critical points of a
suitable functional.

\subsection{Functional framework}\label{sec:one}

Define $s_0:=\min\{s\in [\zeta,+\infty[\;\mid g(s)=0\}$
($s_0=+\infty$ if $g(s)\neq 0$ for any $s\ge\zeta$). We set $\tilde
g:\R\to\R$ the function such that
    \begin{equation}\label{eq:tilde}
      \tilde g(s)=\left\{
        \begin{array}{ll}
                g(s) &\hbox{ on } [0,s_0];
                \\
                0 &\hbox{ on } \R_+\setminus [0,s_0];
                \\
                (g(-s)-ms)^+ - g(-s) &\hbox{ on } \R_-.
      \end{array}
      \right.
    \end{equation}
By the strong maximum principle, if $u$ is a nontrivial solution of
\eqref{eq:main} with $\tilde g$ in the place of $g$, then $0< u<
s_0$ and so it is a positive solution of \eqref{eq:main}. Therefore
we can suppose that $g$ is defined as in \eqref{eq:tilde}, so that
({\bf g1}), ({\bf g2}), ({\bf g4}) and the following limit
    \begin{equation}\label{eq:limg}
        \lim_{s\to\pm\infty} \frac{g(s)}{s^{2^*-1}}=0
    \end{equation}
hold.
\\
We set
\begin{align*}
g_1(s) &:= \left\{
\begin{array}{ll}
(g(s)+ms)^+, & \hbox{if }s\ge0,
\\
0, & \hbox{if }s<0,
\end{array}
\right.
\\
g_2(s) &:=g_1(s)-g(s), \quad \hbox{for }s\in \R.
\end{align*}
Since
    \begin{align}
        \lim_{s\to 0}\frac{g_1(s)}{s} &= 0,\nonumber
        \\
        \lim_{s\to\pm\infty}\frac{g_1(s)}{s^{2^*-1}}&=0,\label{eq:lim2}
    \end{align}
and
    \begin{equation}
        g_2(s) \ge ms,\quad  \forall s\ge 0,\label{eq:g2}
    \end{equation}
by some computations, we have that for any $\eps>0$ there exists
$C_\eps>0$ such that
    \begin{equation}
        g_1(s) \le C_\eps s^{2^*-1}+\eps g_2(s),\quad  \forall
        s\ge0\label{eq:g1g2}.
    \end{equation}
If we set
    \begin{equation*}
        G_i(t):=\int^t_0g_i(s)\,ds,\quad i=1,2,
    \end{equation*}
then, by \eqref{eq:g2} and \eqref{eq:g1g2}, we have
    \begin{equation}
         G_2(s) \ge \frac m 2 s^2,\quad  \forall s\in\R\label{eq:G2}
    \end{equation}
and for any $\eps>0$ there exists $C_\eps>0$ such that
    \begin{equation}
        G_1(s) \le \frac {C_\eps} {2^*} s^{2^*}+\eps G_2(s),\quad  \forall
        s\in\R\label{eq:G1G2}.
    \end{equation}

We define the functional
    \begin{equation*}
        I(u):=\frac  1 2 \tilde M(\|u\|^2)-\irt G(u)
    \end{equation*}
where we are denoting by $\|\cdot\|$ the norm
$\left(\irt|\n\cdot|^2\right)^{\frac 1 2 }$ of the space $\D,$ which
is the closure of the compactly supported smooth functions with
respect to the norm $\|\cdot\|.$ The previous functional is $C^1$ in
$\H,$ being $\H$ the closure of the compactly supported smooth
functions with respect to the norm
$$\|\cdot\|^2_{\H}=\irt|\n\cdot|^2+\irt|\cdot|^2.$$
We will look for critical points of the functional $I$ inside
$$\Hr:=\{u\in\H\mid u \hbox{ is radial}\},$$
which is a natural constraint for the functional $I$ by Palais'
principle of symmetric criticality. By standard variational
arguments, it is easy to prove that any critical point of $I$
corresponds to a weak solution of the equation. By the maximum
principle we will get a positive solution.

\subsection{Existence of a ground state solution}

We look for a ground state solution to
    \begin{equation*}
        -(a+b\|u\|^2)\Delta u =g(u), \quad u:\RN\to\R, \; N=3,\,4.
    \end{equation*}
A ground state of \eqref{eq:kirchhoff} is a nontrivial solution
$\bar u\in\H$ such that, if $v\in\H$ is another nontrivial solution
of \eqref{eq:kirchhoff}, then
    $$I(\bar u)\le I(v),$$
where $I:\H\to\R$ is the functional of the action related with
\eqref{eq:kirchhoff}, namely
    \begin{equation*}
        I(u)= \frac 1 2 \left( a +\frac b 2\irn|\n u|^2\right)\irn|\n u|^2-\irn G(u).
    \end{equation*}
Usually a standard technique to find a ground state consists in
looking for minimizers of the functional of the action restricted to
a natural constraint which contains all the possible solutions. A
candidate to play this role is the following Pohozaev set
    \begin{equation*}
        \P=\{u\in\H\setminus\{0\}\mid P(u)=0\}
    \end{equation*}
where for any $u\in\H$
    \begin{equation*}
        P(u)=a\frac{N-2}{2N}\irn |\n u|^2
        + b\frac{N-2}{2N}\left(\irn |\n u|^2\right)^2-\irn G(u).
    \end{equation*}

Actually the equality $P(u)=0$ is nothing but the Pohozaev identity
related with equation.

We will prove Theorem \ref{th:grst} following this scheme:
    \begin{itemize}
        \item[{\it step 1}:] we show that $\P$ is a $C^1$ manifold
        containing all the possible solutions of equation
        \eqref{eq:kirchhoff};
        \item[{\it step 2}:] we prove that $\P$ is a natural
        constraint, in the sense that every critical point of $I$
        restricted to $\P$ is a critical point of $I$;
        \item[{\it step 3:}] we show that $I|_\P$ is bounded below and
            $$\mu=\inf_{u\in\P}I(u)=\inf_{u\in\P}\frac 1 N\left(a\irn |\n u|^2+\frac{(4-N)b}{4}\left(\irn|\n
            u|^2\right)^2\right)$$
        is
        achieved.
    \end{itemize}

It is easy to see that $P$ is a $C^1$ functional. Moreover $\P$ is
nondegenerate in the
        following sense:
            \begin{equation*}
                \forall u\in\P:P'(u)\neq 0
            \end{equation*}
        so that $\P$ is a $C^1$ manifold of codimension one.
        Indeed, suppose by contradiction that $u\in\P$ and $P'(u)= 0,$
        namely $u$ is a solution of the equation
            \begin{equation}\label{eq:eqpo}
                -\left(a\frac{N-2} N + 2b\frac{N-2}N\|u\|^2\right)\Delta u =g(u).
            \end{equation}
        As a consequence, $u$ satisfies the Pohozaev identity referred to
        \eqref{eq:eqpo}, that is
            \begin{equation}\label{eq:pohopoho}
                a\frac{(N-2)^2}{2N^2}\irn |\n u|^2 + b \frac{(N-2)^2}{N^2}\left(\irn |\n u|^2\right)^2= \irn G(u).
            \end{equation}
        Since $P(u)=0,$ by \eqref{eq:pohopoho} we get
            \begin{equation*}
                -2a\irn |\n u|^2 + b (N-4)\left(\irn |\n
                u|^2\right)^2=0
            \end{equation*}
        and we conclude that $u=0$: absurd since $u\in\P.$ So $\P$
        is a $C^1$ manifold. It obviously contains all the solutions to
        \eqref{eq:kirchhoff} since every solution satisfies the
        Pohozaev identity $P(u)=0.$

        Now we pass to prove that $\P$ is a natural constraint for $I.$
        Suppose that $u\in\P$ is a critical point of the
        functional $I|_\P.$ Then, there exists $\l\in\R$ such
        that
            \begin{equation*}
                I'(u) = \l P'(u),
            \end{equation*}
        that is
            \begin{equation*}
                -(a + b \|u\|^2)\Delta u -g(u)=-\l (a\frac{N-2} N + 2b\frac{N-2}N\|u\|^2)\Delta u -\l
                g(u).
            \end{equation*}
        As a consequence, $u$ satisfies the following Pohozaev
        identity
            \begin{equation*}
                P(u) = \l a\frac{(N-2)^2}{2N^2}\irn |\n u|^2 +\l b \frac{(N-2)^2}{N^2}
                \left(\irn |\n u|^2\right)^2-\l \irn G(u)
            \end{equation*}
        which, since $P(u)=0$, can be written
            \begin{equation*}
               \l \left(-2a\irn |\n u|^2 + b(N-4)\left(\irn |\n u|^2\right)^2\right)= 0.
            \end{equation*}
        Since $u\neq 0,$ we deduce that $\l=0,$ and we conclude.

Now it remains to show that $\mu$ is achieved.

By the well known properties of the Schwarz symmetrization, we are
allowed to work on the functional space $\Hr$ as showed by the
following
    \begin{lemma}
        For any $u\in\P$ there exists $\tilde u\in\P\cap \Hr$ such
        that $I(\tilde u)\le I(u)$
    \end{lemma}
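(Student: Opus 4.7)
The plan is to combine Schwarz symmetrization with a Pohozaev-preserving dilation. Given $u\in\P$, let $u^{\#}$ be the Schwarz rearrangement of $|u|$. By Pólya–Szegő,
\[
\irn|\n u^{\#}|^2\le\irn|\n u|^2.
\]
To compare the potential terms we exploit the decomposition $G=G_1-G_2$ from the functional framework: the extension \eqref{eq:tilde} forces $g_1\equiv 0$ on $\R_-$ and the identity $\tilde g(-s)=g_2(s)$ for $s\ge 0$ makes $g_2$ odd, so that $G_1\equiv 0$ on $\R_-$ and $G_2$ is even. This yields the pointwise inequality $G(|s|)-G(s)=G_1(|s|)\ge 0$, and applying equimeasurability separately to $G_1$ and $G_2$ gives
\[
\irn G(u^{\#})=\irn G(|u|)\ge \irn G(u).
\]
Substituting these two inequalities into the definition of $P$ yields $P(u^{\#})\le P(u)=0$.

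If $P(u^{\#})=0$, take $\tilde u:=u^{\#}\in\P\cap\Hr$. Otherwise introduce the radial dilation $v_t(x):=u^{\#}(x/t)$, for which
\[
P(v_t)=a\,\frac{N-2}{2N}\,t^{N-2}\|u^{\#}\|^2+b\,\frac{N-2}{2N}\,t^{2N-4}\|u^{\#}\|^4-t^N\irn G(u^{\#}).
\]
Since $N\in\{3,4\}$ the leading term as $t\to 0^+$ is the positive one proportional to $t^{N-2}$, while $P(v_1)=P(u^{\#})<0$; the intermediate value theorem therefore supplies $t^{*}\in(0,1)$ with $P(v_{t^{*}})=0$. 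Setting $\tilde u:=v_{t^{*}}\in\P\cap\Hr$ we obtain
\[
\|\tilde u\|^2=(t^{*})^{N-2}\|u^{\#}\|^2\le\|u^{\#}\|^2\le\|u\|^2.
\]

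To close the argument I would invoke the explicit representation of $I$ on the Pohozaev set already recorded in the excerpt, obtained by using $P(w)=0$ to eliminate $\int G(w)$,
\[
I(w)=\frac{1}{N}\!\left(a\|w\|^2+\frac{4-N}{4}\,b\|w\|^4\right),\qquad w\in\P.
\]
This is a non-decreasing function of $\|w\|^2$ precisely because $(4-N)/4\ge 0$ when $N=3$ or $N=4$; applied to $\tilde u,u\in\P$ with $\|\tilde u\|^2\le\|u\|^2$ it delivers $I(\tilde u)\le I(u)$.

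The delicate point is the inequality $\irn G(u^{\#})\ge \irn G(u)$ in the possibly sign-changing case: it is not a direct consequence of equimeasurability, and only holds thanks to the asymmetric extension of $g$ to $\R_-$ from Section \ref{sec:one}, which makes $G_1$ vanish on $\R_-$ and $G_2$ even. Once this is granted, everything else is a routine IVT/scaling argument whose only role for the dimensional restriction $N\in\{3,4\}$ is to keep the sign of $4-N$ non-negative in the representation of $I|_\P$.
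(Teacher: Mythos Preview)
Your argument is correct and follows exactly the paper's strategy: Schwarz-symmetrize, dilate back onto $\P$ via the intermediate value theorem, and compare using the representation $I|_\P(w)=\frac{1}{N}\bigl(a\|w\|^2+\frac{4-N}{4}b\|w\|^4\bigr)$; you in fact supply the justification of $\int G(u^{\#})\ge\int G(u)$ and of $t^*\le 1$ that the paper leaves to the reader. One harmless slip: the identity $G(|s|)-G(s)=G_1(|s|)$ holds only for $s\le 0$ (for $s>0$ the left side is $0$ while $G_1(s)$ need not vanish), but the inequality $G(|s|)\ge G(s)$ you actually use is unaffected.
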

    \begin{proof}
        Let $u\in\P$ and set $u^*\in\Hr$ its symmetrized. It is
        easy to see that there exists $0<\tilde \t\le 1$ such that
        $\tilde u:=u^*(\cdot/\tilde\t)\in\P\cap\Hr$ and
        \begin{align*}
            I(\tilde u)&=\frac 1 N\left(a\irn |\n\tilde u|^2+\frac{(4-N)b}{4}\left(\irn|\n
            \tilde u|^2\right)^2\right)\\
            &=a
        \frac {\tilde \t^{N-2}} N \irn | \n u^*|^2+b\frac{(4-N)\tilde \t^{2(N-2)}}{4N}\left(\irn|\n
             u^*|^2\right)^2\\
             &\le
        \frac a N \irn | \n u^*|^2+\frac{(4-N)b}{4N}\left(\irn|\n
             u^*|^2\right)^2\\
             &\le I(u).
        \end{align*}
    \end{proof}
Before we proceed with the proof of the main result, another
preliminary result is required
    \begin{lemma}\label{le:b}
        $\mu:=\inf\{I(v)\mid v\in\P\}>0.$
    \end{lemma}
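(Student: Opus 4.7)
The plan is to reduce the lower bound on $\mu$ to a uniform positive lower bound on $\|u\|$ over $\P$. On $\P$, the identity recorded in step 3 gives
\[
I(u) \;=\; \frac{1}{N}\left(a\,\|u\|^2 + \frac{(4-N)b}{4}\|u\|^4\right) \;\ge\; \frac{a}{N}\|u\|^2,
\]
the last inequality being valid precisely because $N\in\{3,4\}$ makes the quartic term nonnegative. It therefore suffices to prove that $\inf_{u\in\P}\|u\|>0$.

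For this I would use the Pohozaev constraint $P(u)=0$ itself, which after discarding the nonnegative quartic term on the left reads
\[
\frac{a(N-2)}{2N}\,\|u\|^2 \;\le\; \irn G(u),
\]
and then aim to control the right-hand side from above by $C\|u\|^{2^*}$. Once this is achieved, dividing through yields $\|u\|^{2^*-2}\ge \tfrac{a(N-2)}{2NC}$, i.e.\ the desired uniform lower bound on $\|u\|$; note that $2^*-2>0$ is what makes this conclusion nontrivial.

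The upper bound on $\irn G(u)$ is where the decomposition $G=G_1-G_2$ of Subsection \ref{sec:one} enters. Fix any $\eps\in(0,1)$; combining (\ref{eq:G1G2}) with $G_2\ge 0$ (which follows from (\ref{eq:G2})) gives, for every $s\in\R$,
\[
G(s) \;=\; G_1(s)-G_2(s) \;\le\; \frac{C_\eps}{2^*}|s|^{2^*} + (\eps-1)G_2(s) \;\le\; \frac{C_\eps}{2^*}|s|^{2^*}.
\]
Integrating and invoking the Sobolev embedding $\D\hookrightarrow L^{2^*}(\RN)$ (valid for $N\ge 3$) produces a constant $C>0$ such that $\irn G(u)\le C\|u\|^{2^*}$ for every $u\in\H$. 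Combined with the Pohozaev bound of the previous paragraph this forces $\|u\|\ge c_0$ on $\P$ for some $c_0>0$, whence $\mu\ge \tfrac{a c_0^2}{N}>0$. There is no genuine obstacle in this argument: all the analytical ingredients are already in place, and the only mildly delicate point is the choice $\eps<1$ that makes the $(\eps-1)G_2$ remainder disposable before the Sobolev inequality is applied.
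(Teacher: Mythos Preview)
Your argument is correct and follows essentially the same route as the paper's proof: both establish a uniform lower bound $\|u\|\ge C''$ on $\P$ by combining the Pohozaev constraint with the estimate \eqref{eq:G1G2} (with $\eps<1$) and the Sobolev embedding, and then conclude via $I|_\P(u)\ge \tilde C\|u\|^2$. The only cosmetic difference is that the paper keeps the quartic term $b\tfrac{N-2}{2}\|u\|^4$ and the $G_2$-integral on the left before discarding them, whereas you drop the quartic at the outset and absorb $(\eps-1)G_2$ pointwise; neither choice affects the substance.
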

    \begin{proof}
        If $u\in\P$, then, by \eqref{eq:G1G2}, we have
            \begin{align*}
                C \| u\|^2&\le a\frac{N-2}2 \irn |\n u|^2+b\frac{N-2}{2}\left(\irn |\n u|^2\right)^2+ N
                (1-\eps)\irn G_2(u)\\
                &\le N C_\eps \irn | u|^{2^*}\le
                C' \| u\|^{2^*}
            \end{align*}
        where $\eps<1,$ $C_\eps,$ $C$ and $C'$ are suitable positive
        constants. We deduce that there exists a positive constant $C''$ such
        that $\|u \|\ge C''$ for any $u\in\P.$ The conclusion
        then follows once one observes that $I|_\P(u)\ge \tilde C\| u\|^2.$
    \end{proof}
\medskip
Now let $(u_n)_n$ be a minimizing sequence for $I|_\P$ in $\Hr,$
namely
    \begin{equation}\label{eq:hypotheses}
        \{u_n\}_n\subset\P\cap\Hr,\quad I(u_n)\to \mu.
    \end{equation}
Obviously $\| u_n\|$ is bounded. Moreover, since
$\{u_n\}_n\subset\P,$ certainly, by \eqref{eq:G1G2}, there exist
$0<\eps<1$ and $C_\eps >0$ such that
    \begin{equation*}
        a\frac{N-2}2 \irn |\n u|^2+b\frac{N-2}{2}\left(\irn |\n u|^2\right)^2+ N
                (1-\eps)\irn G_2(u)\le C_\eps N \|u_n\|_{2^*}^{2^*},
    \end{equation*}
and then we deduce also the boundedness of the $L^2-$norm of
$\{u_n\}_n$ by the continuous Sobolev embedding $\D\hookrightarrow
L^{2^*}(\RN)$ and \eqref{eq:G2}.

Let $u\in\Hr$ be the function such that, up to subsequences,
    \begin{equation}\label{eq:weak3}
        u_n\rightharpoonup u, \;\hbox{weakly in }\H.
    \end{equation}
We are going to prove that there exists $\bar\t>0$ such that
$$\bar u\in\P \hbox{ and } I(\bar u)=\mu$$
where $\bar u:= u(\cdot/\bar\t).$\\
Actually, by compactness due to the radial symmetry, from the weak
convergence \eqref{eq:weak3} we deduce
    \begin{equation}\label{eq:Strauss}
        \lim_n \irn G_1(u_n)=\irn G_1(u).
    \end{equation}
Of course, $u\neq 0.$ Otherwise, by \eqref{eq:Strauss} and since
$u_n\in\P$ for any $n\ge 1,$ we should have that
    \begin{equation*}
        0\le \limsup_n a\frac {N-2} 2 \|u_n\|^2 \le N \lim_n \irn G_1(u_n)=0,
    \end{equation*}
which, by \eqref{eq:hypotheses}, contradicts Lemma \ref{le:b}.\\
By \eqref{eq:Strauss}, the lower semicontinuity of the $\D-$norm and
the Fatou lemma, we have
    \begin{align*}
       a &\frac{N-2}{2}\irn |\n u|^2+b\frac{N-2}{2}\left(\irn |\n u|^2\right)^2+N\irn G_2(u)\\
       &\le\liminf_n \left(a \frac{N-2}{2}\irn |\n u_n|^2+b\frac{N-2}{2}\left(\irn |\n u_n|^2\right)^2+N\irn G_2(u_n)\right)\\
       &=\lim_n N\irn G_1(u_n)= N\irn G_1(u).
    \end{align*}
Let $0<\bar\t\le 1$ such that $\bar u= u(\cdot/\bar\t)\in\P.$ Using
the lower semicontinuity of the $\D-$norm, we infer that
    \begin{align*}
        I(\bar u) &= a\frac {\bar\t^{N-2}} N \irn | \n u|^2+b\frac{(4-N)\bar \t^{2(N-2)}}{4N}\left(\irn|\n
             u|^2\right)^2 \\
             &\le
        \liminf_n \frac {a} N \irn | \n u_n|^2+\frac{b(4-N)}{4N}\left(\irn|\n
             u_n|^2\right)^2 = \lim_n I(u_n) = \mu
    \end{align*}
and then we conclude.

\end{document}